\newtheorem{theorem}{Theorem}
\newtheorem{lemma}{Lemma}
\newtheorem*{KT}{\textup{\textbf{Khinchin's Theorem}}}
\newtheorem*{Theorem}{\textup{\textbf{Theorem}}}
\theoremstyle{definition}
\theoremstyle{remark}
\newtheorem{remark}{\textup{\textbf{Remark}}}
\newcommand{\abs}[1]{\lvert#1\rvert}
\newcommand{\absb}[1]{\bigl\lvert#1\bigr\rvert}
\newcommand{\absB}[1]{\Bigl\lvert#1\Bigr\rvert}
\newcommand{\absBB}[1]{\Biggl\lvert#1\Biggr\rvert}
\newcommand{\NN}{\mathbb{N}}
\newcommand{\NNa}{\mathbb{N}^*}
\newcommand{\ZZ}{\mathbb{Z}}
\newcommand{\RR}{\mathbb{R}}
\newcommand{\set}[1]{\{#1\}}
\newcommand{\SET}[2]{\{#1\thinspace |\thinspace #2\}}
\newcounter{simplesection} 
\newcommand{\simplesection}[1]
   {\pagebreak[2]\begin{flushleft}\bf{\arabic{simplesection}\stepcounter{simplesection}. #1}\\ 
     \end{flushleft}}
\begin{document}

\title[Continuous, integrable functions with extreme behavior at infinity
]{Construction of continuous, integrable functions with extreme behavior at infinity}


\author[G. Batten]{George W. Batten, Jr.}
\address{Advanced Image Measurement Systems\\
1211 Rosine Street\\Houston, Texas, 77019}
\curraddr{}
\email{gbatten.aims@gmail.com}
\thanks{}

\address{}
\curraddr{}
\email{}
\thanks{}

\subjclass[2010]{26A12 (primary); 26A06 (secondary)}

\date{}

\dedicatory{}

\begin{abstract}
For any real sequence $\{c_n\}$ with $c_n\rightarrow\infty$, this constructs a function 
$f$ which is continuous and integrable on the real line, and such that for every real 
$x\neq0$ $\limsup_{n\rightarrow\infty}c_nf(n\,x)=\infty$.
\end{abstract}

\maketitle


Let $f$ be a continuous, integrable, real-valued function on the real numbers $\RR$.   
Let $x\in\RR$, $x\neq0$, and consider what happens to $f(n\,x)$ as $n$ 
tends to $\infty$ through the positive integers $\NNa$.\footnote{$\NN=\{0,1,2,\dotsc\}$, 
and $\NNa=\{1,2,\dotsc\}$, the ISO 31-11 conventions.}   
Emmanuel Lesigne \cite{Lesigne} shows the following:
\begin{enumerate}
\item[T1.] Even if $f$ is not continuous,
$f(n\thinspace x)\rightarrow0$ for almost all $x$.
\item[T2.] For any sequence $\{c_n\}$ with $c_n\rightarrow\infty$, no matter how slowly, 
there is a nonnegative, continuous, integrable function $f_{E'}$ with  
$\limsup_{n\rightarrow\infty}c_nf_{E'}(n\,x)=\infty$ for all $x$ not in a set $E_\textup{K}$ 
of Lebesgue measure zero.
\item[T3.] In this, the condition $c_n\rightarrow\infty$ cannot be replaced by 
$\limsup_{n\rightarrow\infty}c_n = \infty$.
\end{enumerate}
He asks whether the the second statement (T2) is true for \textit{all} $x\neq0$.  
In this paper we show that it is:

\begin{Theorem}\label{th:1}
For any sequence $\SET{c_n}{n\in\NNa}$ with $c_n\rightarrow\infty$ there is a 
continuous, integrable function $f$ such that  
$\limsup_{n\rightarrow\infty}c_nf(n\,x)=\infty$ for all $x\neq0$.
\end{Theorem}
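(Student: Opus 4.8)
The plan is to take $f$ to be a sum of disjointly supported trapezoidal bumps, one for each pair $(\ell,j)\in\ZZ\times\NNa$; here $\ell$ records the dyadic scale $[2^{\ell},2^{\ell+1})$ of $\abs{x}$ that the bump serves, and $j$ is an auxiliary level that will be sent to infinity to produce the divergence. Two harmless reductions come first. Since $c_n\to\infty$, the sequence $\tilde c_n:=\inf_{m\ge n}c_m$ is non-decreasing, still tends to $\infty$, and satisfies $\tilde c_n\le c_n$; as $f\ge0$, it is enough to arrange $\limsup_n\tilde c_nf(nx)=\infty$, so I may and do assume $c_n$ itself is non-decreasing with $c_n\to\infty$. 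Similarly, I will build $f$ to be even, so only $x>0$ need be treated. Finally, fix for each $j\ge1$ an integer $M_j$ with $c_n\ge 4^{j}$ for all $n\ge M_j$; such $M_j$ exist because $c_n\to\infty$, and necessarily $M_j\to\infty$.

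For the construction, attach to each $(\ell,j)$ a continuous, piecewise-linear $\beta_{\ell,j}\ge0$ that vanishes off a bounded interval $I_{\ell,j}\subseteq(0,\infty)$ and is identically equal to its peak value $h_{\ell,j}:=4^{-\abs{\ell}}\,2^{-j}$ on a closed subinterval $J_{\ell,j}\subseteq I_{\ell,j}$ of length exactly $2^{\ell+1}$ (say: ramp up on an interval of length $1$, then the plateau $J_{\ell,j}$, then ramp down on an interval of length $1$). Enumerating the pairs $(\ell,j)$ and choosing the intervals one at a time, I may require that the $I_{\ell,j}$ are pairwise disjoint, that each lies entirely to the right of all earlier ones (so the family is locally finite), and --- this is the crux --- that the left endpoint of the plateau $J_{\ell,j}$ is at least $M_j\,2^{\ell+1}$. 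Put $f(t):=\sum_{\ell,j}\bigl(\beta_{\ell,j}(t)+\beta_{\ell,j}(-t)\bigr)$. Local finiteness and disjointness of supports make $f$ continuous, and
\[\int_{\RR}f\;=\;2\sum_{\ell\in\ZZ}\sum_{j\ge1}\int\beta_{\ell,j}\;\le\;2\sum_{\ell\in\ZZ}\sum_{j\ge1}4^{-\abs{\ell}}\,2^{-j}\bigl(2^{\ell+1}+1\bigr)\;=\;2\sum_{\ell\in\ZZ}4^{-\abs{\ell}}\bigl(2^{\ell+1}+1\bigr)\;<\;\infty,\]
a convergent two-sided geometric series, so $f$ is integrable.

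To verify the conclusion, fix $x\ne0$ and let $\ell$ be the integer with $\abs{x}\in[2^{\ell},2^{\ell+1})$. For each $j$ the plateau $J_{\ell,j}$ is an interval of length $2^{\ell+1}>\abs{x}$, so the successive points $\abs{x},2\abs{x},3\abs{x},\dots$ cannot jump over it: taking $n_j=\lceil p/\abs{x}\rceil$ with $p$ the left endpoint of $J_{\ell,j}$, one has $n_j\abs{x}\in J_{\ell,j}$. Since $p\ge M_j\,2^{\ell+1}$ and $\abs{x}<2^{\ell+1}$, this forces $n_j>M_j$, hence $c_{n_j}\ge 4^{j}$; and since $J_{\ell,j}$ meets no other $I_{\ell',j'}$ and $f$ is even, $f(n_jx)=f(n_j\abs{x})=h_{\ell,j}=4^{-\abs{\ell}}\,2^{-j}$. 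Therefore
\[c_{n_j}\,f(n_jx)\;\ge\;4^{j}\cdot4^{-\abs{\ell}}\,2^{-j}\;=\;4^{-\abs{\ell}}\,2^{j}.\]
As $j\to\infty$ the integers $n_j>M_j$ tend to infinity while the right-hand side tends to $\infty$, so $\limsup_{n\to\infty}c_nf(nx)=\infty$, as required.

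The one genuinely non-obvious point is the decision to use \emph{wide} bumps --- plateau length comparable to the dyadic scale of $x$ --- rather than the narrow bumps near integers or rationals that the problem seems to invite. With thin targets one is forced into a Borel--Cantelli / Khinchin's-theorem analysis, which only yields the conclusion for Lebesgue-almost every $x$ (in essence Lesigne's T2) and breaks down on the badly approximable and Liouville exceptional sets; the wide plateaus are precisely what catches \emph{every} orbit $\{nx\}$. Integrability, which a fixed-width comb would wreck, is recovered by letting the \emph{heights}, not the widths, decay and by pushing the bumps sparsely out toward infinity. I expect everything after settling on this picture to be routine bookkeeping; note, as a sanity check, that $f$ in fact vanishes at infinity (only finitely many bumps exceed any given height), so $f(nx)\to0$ for every $x$ and the construction is consistent with T1 --- the entire force of the theorem sits in the compensating factor $c_n$, which is exactly why, per T3, $c_n\to\infty$ cannot be weakened.
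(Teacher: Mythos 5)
Your proof is correct, and it takes a genuinely different --- and considerably more elementary --- route than the paper's. The paper does not build $f$ from scratch: it takes Lesigne's function $f_{E'}$, which already works for all $x$ outside the Lebesgue-null exceptional set $E_\textup{K}$ arising from Khinchin's theorem, and devotes all of its effort to a second summand $f_E$ handling a concrete null set $E\supseteq E_\textup{K}$ cut out by continued-fraction conditions (the sets $F_{mk}$ and $G_k$); the bulk of the technical work (Lemmas 1--4) goes into the measure-theoretic regularity of those sets so that Urysohn cutoffs can be inserted. Your construction bypasses Lesigne's T2 and Khinchin's theorem entirely: the one observation that a plateau of length exceeding $\abs{x}$ must capture some multiple $n\abs{x}$, combined with pushing the $(\ell,j)$-plateau beyond $M_j\,2^{\ell+1}$ so that the capturing index is already in the range where $c_n\ge 4^j$, and with letting heights rather than widths decay to preserve integrability, treats every $x\ne 0$ uniformly with no exceptional set to patch. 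The details check: $n_j=\lceil p/\abs{x}\rceil$ gives $n_j\abs{x}\in[p,p+\abs{x})\subseteq J_{\ell,j}$ and $n_j>M_j$; local finiteness of the disjoint supports gives continuity; and the double series bounding $\int f$ converges (your display has $2^{\ell+1}+1$ where the two unit ramps give $2^{\ell+1}+2$, which changes nothing). What the paper's approach buys is an explicit link to Diophantine approximation and a summand $f_E$ supported on a set of finite measure; what yours buys is a short, self-contained argument from first principles that also makes transparent \emph{why} the answer to Lesigne's question is yes for every $x$: wide plateaus catch every orbit $\set{n\abs{x}}$, whereas the thin targets that a Borel--Cantelli analysis forces can only catch almost every one.
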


Our proof constructs a nonnegative, continuous, integrable function $f_E$ which satisfies
the theorem for all $x$ in a set $E\supseteq E_{\textup{K}}$.  Then $f:=f_E+f_{E'}$
satisfies the theorem.\footnote{A colon-equals (:=) combination indicates 
a definition of the item on the left side.} 

In the proof T2, Lesigne uses Khinchin's 
Theorem (Theorem 32 of Khinchin's book \cite{Khinchin}; see also, the Appendix, below).  
It is that theorem that provides the set $E_\textup{K}$.

Khinchin's proof establishes that
 $E_\textup{K}\cap (0,1)\subset E_F^0\cup E_G^0$, where
$E_F^0$ and $E_G^0$ (our notation) are sets constructed in the proofs of the book's 
Theorem 30 and 31, respectively, the superscript $0$ indicating subsets of (0,1).
Khinchin shows that $E_F^0$ and $E_G^0$ are sets of measure zero, and he
notes that the result applies to all intervals of $\RR$ through translation by integers;
i.e., that $E_\textup{K}\subseteq \cup_{j=-\infty}^\infty (j+E_F^0\cup E_G^0)$.

The translation technique does not seem to apply here.  If we knew that the theorem were true
for $x\in (0,1)$, then extending it would require dealing with $f(n\,(x+j)) = f(n\,x+n\,j)$
where $j\in\ZZ$, the set of integers, and it is not clear how one can relate this to $f(n\,x)$.
Therefore, we will take a different approach.

In the proofs of Theorems 30 and 31 of his book, Khinchin defines open subsets 
$E_{m,n}$ and $E_n(e^{A\,n})$ of $(0,1)$.
This notation is not well suited to our purposes, 
so we will use $F_{mn}^0$ and $G_{n}^0$ for these sets, and we have 
$E_F^0:=\cup_{m=1}^\infty \cap_{k=m}^\infty F_{mk}^0$ and
$E_G^0:=\cap_{k=1}^\infty G_{k}^0$.
 
\pagebreak 
\simplesection{The sets $F_{mk}$ and $G_m$}

For any set $S\subseteq\RR$, we will use $\overline{S}$ and $\abs{S}$ for the closure 
and Lebesgue measure of $S$, respectively.
Under the conditions that Khinchin imposes, which we assume, we have sets with the following properties:
\begin{equation}\label{eq:S3}
\absb{F_{mk}^0} \rightarrow 0 \text{ as } k \rightarrow \infty, \quad
    \absB{\overline{F_{mk}^0}}=   \absb{F_{mk}^0}\ne 0, \quad
    0\notin F_{mk}^0;
\end{equation}
\begin{equation}\label{eq:S5}
\absb{G_k^0} \rightarrow 0 \text{ as } k \rightarrow \infty, \quad
    \absB{\overline{G_k^0}}=   \absb{G_k^0}\ne 0, \quad
    0\notin G_k^0.
\end{equation}
Discussion of these properties is deferred until the Section 5.

Extend $F_{mk}^0$ and $G_k^0$ to all intervals in $\RR$ as follows.  
For positive integers $m$ and $k$, 
let $\SET{n(j)\in\NNa}{j\in\ZZ}$ satisfy both
\begin{equation*}
    \absB{F_{m,k+n(j)}^0}<2^{-(\abs{j}+k)}, \text{ and}\quad
    \abs{G_{k+n(j)}^0}<2^{-(\abs{j}+k)},
\end{equation*}
which is possible because of the first property in each of 
\eqref{eq:S3} and \eqref{eq:S5}. 
Translate the sets and combine them: let
\begin{equation*}
    F_{mk}:=\bigcup_{j=-\infty}^\infty (j+F_{m,k+n(j)}^0), \quad\text{and}\quad
    G_{k}:=\bigcup_{j=-\infty}^\infty (j+G_{k+n(j)}^0).
\end{equation*}
Since these are countable unions of sets in pairwise disjoint intervals, from the second property of each 
of \eqref{eq:S3} and \eqref{eq:S5},
$\absb{\overline{F_{mk}}}=\abs{F_{mk}}$, and
$\absb{\overline{G_k}} = \abs{G_k}$.
Also
\begin{equation*}
\absb{F_{mk}} = \sum_{j=-\infty}^\infty\absb{j+F_{m,k+n(j)}^0} <  
\sum_{j=-\infty}^\infty2^{-(\abs{j}+k)} = 3\cdot 2^{-k}.
\end{equation*}
The same applies to $G_k$, so $\abs{G_k}<3\cdot 2^{-k}$. 
Thus, we have the following extensions of \eqref{eq:S3} and \eqref{eq:S5}:
\begin{equation}\label{eq:S11}
\absb{F_{mk}} \rightarrow 0 \text{ as } k \rightarrow \infty, \quad
    \absB{\overline{F_{mk}}}=   \absb{F_{mk}}, \quad
    0\notin F_{mk};
\end{equation}
\begin{equation}\label{eq:S13}
\absb{G_k} \rightarrow 0 \text{ as } k \rightarrow \infty, \quad
    \absB{\overline{G_k}}=   \absb{G_k}, \quad
    0\notin G_k.
\end{equation}

Let 
\begin{equation*}
    F_m := \bigcap_{k=m}^\infty F_{mk},\quad
    F := \bigcup_{m=1}^\infty F_m, \text{ and}\quad
    G := \bigcap_{k=1}^\infty G_k. 
\end{equation*}
Then $\abs{F_m} = 0,\,\abs{F} = 0, \text{ and } \abs{G}=0$.
Let $E:=F\cup G$.  Then $\abs{E}=0$.
It is easy to establish that
$E \supseteq \cup_{j=-\infty}^\infty (j+E_F^0\cup E_G^0)$, so 
$E\supseteq E_\textup{K}$.

We will construct nonnegative, bounded, continuous functions $f_F$ and $f_G$ which are 
zero except on a set of finite measure, and such that, for sufficiently large $n$,
$ f_F(n\,x)\ge 1/\sqrt{\smash[b]{c_n}}$ when $x\in F$, and $ f_G(n\,x)\ge 1/\sqrt{\smash[b]{c_n}}$ 
 when $x\in G$ .
Then $f_E:=f_F+f_G$ will be integrable and $\lim_{n\rightarrow\infty}c_nf_E(n\,x)=\infty$ 
for $x\in E$.  This is more than is necessary.

\simplesection{Auxiliary functions}

Here we define several functions used in constructing $f_F$ and $f_G$.
The continuous function
\begin{equation*}
u(x):=\left\{
    \begin{aligned}
        &0              &&\text{ for } \abs{x}<1/2,\\
        &2\,\abs{x}-1 &&\text{ for } 1/2\le\abs{x}<1,\\
        &1               &&\text{ for } 1\le \abs{x}
    \end{aligned}
\right.
\end{equation*}
will be used to restrict the support of functions to be defined.

For each $m\in\NNa$, choose increasing sequences 
$\SET{k(m,l)\in\NNa}{k(m,l)\ge m,l\in\NNa}$ and
$\SET{k(l)\in\NNa}{l\in\NNa}$,
so that $k(m,l)\rightarrow\infty$ and $k(l)\rightarrow \infty$ as $l\rightarrow\infty$; 
\begin{equation*}
    \absb{F_{m,k(m,l)}}<\frac{1}{l}\thinspace 2^{-(m+l+1)}; \text{ and}\quad
    \absb{G_{k(l)}}<\frac{1}{l}\thinspace 2^{-(l+1)}.
\end{equation*}
This is possible because of the first property in each of 
\eqref{eq:S11} and \eqref{eq:S13}.  
Let $F_{m,k(m,l)}^*$ and $G_{k(l)}^*$ be an open sets such that
\begin{enumerate}
\item[1.] $\overline{F_{m,k(m,l)}} \subset F_{m,k(m,l)}^*, \text{ and}\quad
                 \overline{G_{k(l)}} \subset G_{k(l)}^*$;
\item[2.] $\abs{F_{m,k(m,l)}^*}<2\,\abs{F_{m,k(m,l)}}, \text{ and}\quad
                 \abs{G_{k(l)}^*}<2\,\abs{G_{k(l)}}$.
\end{enumerate}
For example, because of the second property in each of \eqref{eq:S11} and \eqref{eq:S13}, 
we can take $F_{m,k(m,l)}^*$ and 
$G_{k(l)}^*$ to be open sets determining the outer measure of $\overline{F_{m,k(m,l)}}$ and 
$\overline{G_{k(l)}}$, respectively.
Note that 
\begin{equation*}
    F_m \subseteq \bigcap_{l=1}^\infty F_{m,k(m,l)},\text{ and}\quad
    G \subseteq \bigcap_{l=1}^\infty G_{k(l)}.
\end{equation*} 

Let $v_{Fml}$ and $v_{Gl}$ be continuous functions with values in $[0,1]$, and
\begin{equation*}
v_{Fml}(x):=
  \begin{cases}
     0 \text{ for } x\notin F_{m,k(m,l)}^*\\
     1 \text{ for } x\in \overline{F_{m,k(m,l)}},
  \end{cases}
\end{equation*}
and
\begin{equation*}
v_{Gl}(x):=
  \begin{cases}
     0 \text{ for } x\notin G_{k(l)}^*\\
     1 \text{ for } x\in \overline{G_{k(l)}}.
  \end{cases}
\end{equation*}
Urysohn's Lemma provides such functions. 
We see that $v_{Fml}(x)=1$ for $x\in F_m$, and $v_{Gl}(x)=1$ for $x\in G$.

\simplesection{The functions $f_{Fm}$ and $f_{Gm}$}

Henceforth assume, without loss of generality, that 
$\{c_n\}$ is positive and nondecreasing (otherwise, 
replace any $c_n\le 0$ with 1, a finite number of replacements since $c_n\rightarrow\infty$, then replace every
$c_n$ with $\inf_{k\ge n}c_k$).

For $x\in\RR$, and $m\in\NNa$, let
\begin{equation*}
f_{Fm}(x):=\sup_{l\in\NNa}\Biggl(
                                                  \frac{1}{\sqrt{c_l}}\, 
                                                  v_{Fml}\Bigl(\frac{x}{l}\Bigr)\,
                                                  u\Bigl(l^{-1}m\,x^2\Bigr)\, 
                                                  u\Bigl(\frac{x}{m}\Bigr)
                                       \Biggr),
\end{equation*}
and
\begin{equation*}
f_{Gm}(x):=\sup_{l\in\NNa}\Biggl(
                                                  \frac{1}{\sqrt{c_l}}\, 
                                                  v_{Gl}\Bigl(\frac{x}{l}\Bigr)\,
                                                  u\Bigl(l^{-1}m\,x^2\Bigr)\, 
                                                  u\Bigl(\frac{x}{m}\Bigr)
                                       \Biggr),
\end{equation*}
\pagebreak[3]
These functions have the following properties:
\pagebreak[3]
\begin{enumerate}
\item[P1.] $0\le f_{Fm}(x)\le 1/\sqrt{c_1}$, and $0\le f_{Gm}(x)\le 1/\sqrt{c_1}$.
\item[P2.] $f_{Fm}$ and $f_{Gm}$ are continuous.
\item[P3.] If $n\in\NNa$, $m\,n\,x^2 \ge 1$,
                          and $n\,\abs{x} \ge m$; then 
                          $f_{Fm}(n\,x)\ge 1/\sqrt{c_n}$ if $x\in F_m$, and
                          $f_{Gm}(n\,x)\ge 1/\sqrt{c_n}$ if $x\in G$.
\item[P4.] For $x$ in any bounded set, $f_{Fm}(x)=0$ and $f_{Gm}(x)=0$ except for 
                           a finite number of values of $m$.
\item[P5.] $\SET{x}{f_{Fm}(x)\ne 0}\subseteq
                           \bigcup_{l=1}^\infty (l\,F_{m,k(m,l)}^*)$, and
                $\SET{x}{f_{Gm}(x)\ne 0}\subseteq
                           \bigcup_{l=1}^\infty (l\,G_{k(l)}^*)$.
\item[P6.] $\bigl\lvert \SET{x}{f_{Fm}(x)\ne 0}\bigr\rvert \le 2^{-m}$, and
                $\bigl\lvert \SET{x}{f_{Gm}(x)\ne 0}\bigr\rvert \le 2^{-m}$.
\end{enumerate}
The justifications for these are the following:
\begin{enumerate}
\item[1.] $1/\sqrt{\smash[b]{c_l}} \le 1/\sqrt{\smash[b]{c_1}}$ since 
                 $c_n$ is nondecreasing.
\item[2.] Locally each of $f_{Fm}$ and $f_{Gm}$ is the maximum of a 
                 finite number of continuous functions: 
                 if $\abs{x}<M$, then $u(l^{-1}m\,x^2)=0$ if $l>2\,m\,M^2$.
\item[3.] When $l=n$, $n\,x/l=x$, so the following hold: 
                 $l^{-1}m\,(n\,x)^2 = m\,n\,x^2 \ge 1$ so 
                      $u\bigl(l^{-1}m\,(n\,x)^2\bigr) = 1$; 
                $u(n\,x/m)=1$; 
                 if $x\in F_m$, $v_{Fml}(n\,x/l)=1$, so
                      $f_{Fm}(n\,x)\ge 1/\sqrt{c_l} =1/\sqrt{c_n}$; and  
                 if $x\in G$, $v_{Gl}(n\,x/l)=1$, so
                      $f_{Gm}(n\,x)\ge 1/\sqrt{c_l} =1/\sqrt{c_n}$.
\item[4.] For $\abs{x}<M$,  $u(x/m)=0$ for $m\ge 2\,M$. 
\item[5.] $\SET{x}{v_{Fml}(x/l)\ne 0} \subseteq l\, F_{m,k(m,l)}^*$, and 
              $\SET{x}{v_{Gm}(x/l)\ne 0} \subseteq l\, G_{k(l)}^*$.\smallskip
\item[6.] $\bigl\lvert \SET{x}{f_{Fm}(x)\ne 0} \bigr\rvert
                          \le \sum_{l=1}^\infty l\,\absb{F_{m,k(m,l)}^*} 
                          < \sum_{l=1}^\infty  l\cdot 2\,\abs{F_{m,k(m,l)}}\smallskip\\*
             \indent \qquad\qquad\qquad\,
                          < \sum_{l=1}^\infty l\cdot 2 \cdot l^{-1}\,2^{-(m+l+1)} = 2^{-m}$,\\*
            and similarly for $f_{Gm}$.
\end{enumerate}

\simplesection{Proof of the theorem}

Let
\begin{equation*}
f_F(x):=\sup_{m\in\NNa} f_{Fm}(x),\text{ and}\quad f_G(x):=\sup_{m\in\NNa} f_{Gm}(x).
\end{equation*}
By P1, P2, and P4, $f_{F}$ is nonnegative, continuous and bounded.
Moreover
\begin{equation*}
\SET{x}{f_{F}(x)\ne 0} = \bigcup_{m=1}^\infty\SET{x}{f_{Fm}(x)\ne 0}
\end{equation*}
so, by P6, 
\begin{equation*}
\bigl\lvert \SET{x}{f_{F}(x)\ne 0} \bigr\rvert < \sum_{m=1}^\infty 2^{-m}=1.
\end{equation*}
Therefore, $f_{F}$ is bounded, and it is zero except on a set of finite measure, hence, 
it is integrable on $\RR$.  
That $f_F(n\,x) \ge 1/\sqrt{c_n}$ for $x \in F$ and large enough $n$ follows from
property P3 since $x$ will be in some $F_m$. 
Thus
\begin{equation*}
\lim_{n\rightarrow\infty} c_n\, f_{F}(n\,x) \ge
\lim_{n\rightarrow\infty} \sqrt{c_n}=\infty.
\end{equation*}
The same applies, \textit{mutatis mutandis}, to $f_G$.

Thus, $f=f_E+f_{E'}=f_F+f_G+f_{E'}$ satisfies the requirements and this completes the proof
of the theorem.\qed

\simplesection{Properties of $F_{mk}^0$ and $G_k^0$}

Khinchin establishes the first property in both \eqref{eq:S3} and \eqref{eq:S5}.  
Proof of other properties in \eqref{eq:S3} and \eqref{eq:S5} 
requires understanding the definitions of the sets $F_{mk}^0$ and $G_m^0$, 
for which we need a few facts about continued fractions.

The notation $\alpha(n)=[a_0;a_1,a_2,\dotsc,a_n]$ will be used for the 
\textit{terminating continued fraction}
\begin{equation*}
\alpha(n) = a_0+\cfrac{1}{a_1+
             \cfrac{1}{a_2+\underset{\ddots}{}
               \underset{\underset{\genfrac{}{}{0pt}{0}{}{+\dfrac{1}{a_n}}}{}}{} }}
\end{equation*}
with every numerator equal to $1$.  
The numbers $a_0, \dotsc, a_n$ are called the \textit{elements} of $\alpha(n)$. 
Henceforth, 
$a_0\in\ZZ$;  $a_k\in\NNa$ for $1\le k<n$; and $a_n>1$.  The latter is 
done for uniqueness without loss of generality: if $a_n=1$, remove 
it and replace $a_{n-1}$ with $a_{n-1}+1$. 
A \textit{nonterminating continued fraction} $\alpha$ 
has the same form except that the term with $a_n$ is absent.
It will be convenient to extend the notation as follows:  for a terminating 
$\alpha(n)$, let $a_i=0$ for $i>n$, and use the nonterminating form
\begin{equation*}
    \alpha(n)=[a_0;a_1,\dotsc,a_n,a_{n+1},\dotsc]=[a_0;a_1,\dotsc,a_n,0,0,\dotsc].
\end{equation*}
In particular, if $a_i=0$ for $i\ge 1$, $[a_0;0,0,\dotsc]=a_0$.

A terminating continued fraction is a simple fraction:
\begin{equation}\label{eq:CF0}
\alpha(n) = \frac{p_n}{q_n};
\end{equation}
$p_i$ and $q_i$ are integers defined by
\begin{equation}\label{eq:CF1}
\begin{aligned}
p_{-1}:=1,\quad p_0:=a_0,\quad q_{-1}:=0,\quad q_0:=1,\\
\left.
\begin{aligned}
p_i&:=a_i\,p_{i-1}+p_{i-2}\\
q_i&:=a_i\,q_{i-1}+q_{i-2}
\end{aligned}
\right\}\quad\text{for } i=1,\dotsc
\end{aligned}
\end{equation}
\cite[Theorem 1]{Khinchin}.  
For $i\ge 1$ they are increasing functions of $i$ when $a_i\ne 0$, and
\begin{equation*}
\frac{p_i}{q_i}=[a_0;a_1,\dotsc,a_i]
\end{equation*}
is the \textit{convergent of rank} $i$ of $\alpha(n)$.
We identify any $\alpha \in\RR$ with its unique continued fraction; 
if $\alpha$ is irrational, the continued fraction is nonterminating
and its convergents tend to $\alpha$. 

For any $\alpha$, terminating or not,  the representation
can be truncated:  for $i\ge 1$, $\alpha = \alpha(i,r_i)$, where
\begin{equation}\label{eq:CF1a}
\alpha(i,r_i):=[a_0;a_1,\dotsc,a_{i},r_i],
\end{equation}
with  $r_i:=a_{i+1}+[0;a_{i+2},\dotsc]$.  If $r_{i+1}\ne 0$ (i.e., $a_{i+2}\ne 0$), 
$r_i=a_{i+1}+1/r_{i+1}$, so $a_{i+1}< r_i < a_{i+1}+1$.  
If $\alpha$ is nonterminating, $r_i$ can be any number in $(1,\infty)$. 
From \eqref{eq:CF0} and \eqref{eq:CF1} we have
\begin{equation}\label{eq:CF3}
\alpha(i,r_i)=\frac{r_i\,p_{i}+p_{i-1}}{r_i\,q_{i}+q_{i-1}}.
\end{equation}

In \eqref{eq:CF1}, multiply $p_i$ by $q_{i-1}$ and $q_i$ by $p_{i-1}$, then 
subtract the results to obtain a recursion which yields
\begin{equation}\label{eq:CF2}
    p_{i-1}\,q_i-p_i\,q_{i-1} = (-1)^i \quad\text{for } i\ge 0
\end{equation}
\cite[Theorem 2]{Khinchin}.

We will limit our attention to the interval $(0,1)$; that is, take $a_0=0$. 
According to \eqref{eq:CF3}, the range of $\alpha(i,r_i)$ for $r_i\in(1,\infty)$ is an open  interval 
$J_{\alpha(i)}$ with endpoints
\begin{equation*}
\frac{p_{i}}{q_{i}}, \quad\text{and}\quad \frac{p_{i}+p_{i-1}}{q_{i}+q_{i-1}}.
\end{equation*}
This interval is all numbers of the form $[0;a_1,\dotsc,a_i,a_{i+1},\dotsc]$, with
specified $\alpha(i) =[0;a_1,\dotsc,a_i]$, and any $a_{i+1}\in \NNa$, whether 
terminating or not.  Thus $0\notin J_{\alpha(i)}$. 

Let $\SET{\phi(i)\in\RR}{i\in\NNa \text{ and } \phi(i)>1}$ be a nondecreasing sequence, and
use it to constrain the elements of $\alpha(m+k,r_{m+k})$  by the condition
\begin{equation}\label{eq:E1}
    a_{m+i}<\phi(m+i)\text{ for }i=1,\dotsc,k;
\end{equation}
there are no constraints on $a_1,\dotsc,a_m$ (see Theorem 30 in Khinchin's book). 
The set of all numbers in (0,1) satisfying condition \eqref{eq:E1} and $1<r_{m+k}<\infty$ 
is the open set
\begin{equation}\label{eq:F3}
F_{mk}^0 :=\bigcup_{\alpha(m+k)}J_{\alpha(m+k)},
\end{equation}
where the union is over all $\alpha(m+k)$ satisfying \eqref{eq:E1}.  
Thus $0\notin F_{mk}^0$, and $\abs{F_{mk}^0}\ne 0$.

For $k\in\NNa$ and $A$ a positive number satisfying $A-\ln A - \ln 2 - 1 > 0$, the set of numbers 
$\alpha(k,r_k)\in(0,1)$ satisfying the product bound 
\begin{equation}\label{eq:G1}
    a_1\cdots a_k \ge e^{A\,k},
\end{equation}
and $1<r_k<\infty$ is
\begin{equation*}
G_{k}^0:=\bigcup_{\alpha(k)} J_{\alpha(k)},
\end{equation*}
where the union is over all $\alpha(k)$ satisfying \eqref{eq:G1} 
(see the proof of Theorem 31 in Khinchin's book).
Thus $0\notin G_{k}^0$, and $\abs{G_{k}^0}\ne 0$.

It is not true that $\absb{\overline{S}}=\abs{S}$ for any set $S\subset\RR$, even if
$S$ is a countable union of pairwise disjoint open intervals.  The construction of a Cantor
set with nonzero measure provides a counterexample.

Here we have special sets, $S = F_{mk}^0$ or $S = G_k^0$, which are countable
unions of pairwise disjoint open sets.  The closure $\overline{S}$ is the union of the
closures of the open sets and a set $L$.  The latter comprises the limits of sequences
in $S$ but not ultimately in a single one of the open sets.  Now we will establish that
$L$ is countable, and that $\abs{\overline{S}} = \abs{S}$.  The continued-fraction
construction provides an ordering of the open intervals that makes this possible.

For this, we consider a sequence $\alpha_n=[0;a_{n1},a_{n2},\dotsc]\in S$,
and let that $\alpha_n\rightarrow \alpha = [0;a_1,a_2,\dotsc]$ as 
$n\rightarrow \infty$.  We will use the truncated notations 
\begin{equation*}
  \alpha_n=[0;a_{n1},\dotsc,a_{ni},r_{ni}] \quad\text{and}\quad
  \alpha=[0;a_{1},\dotsc,a_{i},r_{i}].
\end{equation*}

It is apparent that, for small values of $j$, the condition $\alpha_n\rightarrow\alpha$  
forces the elements $a_{nj}$ to become independent of $n$ as $n$ becomes large.  The 
following lemma is a precise statement of that.

\begin{lemma}\label{le:1}      
If, for some $i\in\NNa$, $a_{n1},\dotsc,a_{ni}$ are bounded, and there are numbers $R_{i,min}$ and
$R_{i,max}$ such that $1<R_{i,min}\le r_{ni}\le R_{i,max}<\infty$,
then there is a number $n_0\in\NNa$ such that, for $1\le j\le i$, $a_{nj}$ is independent of $n$ for $n>n_0$, 
and $a_j=\lim_{\nu\rightarrow\infty} a_{\nu j}$; for sufficiently large $n$, $a_j=a_{nj}$.
\end{lemma}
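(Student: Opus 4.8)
The plan is to induct on $i$, with a single elementary fact doing the work at each stage: if a real sequence $(t_n)$ converges (hence is Cauchy) and $t_n=b_n+s_n$ with every $b_n\in\ZZ$ and every $s_n$ lying in one fixed interval $I$ of length strictly less than $1$, then $(b_n)$ is eventually constant; if moreover $I\subset(0,1)$, that constant equals $\lfloor t\rfloor$, where $t=\lim_n t_n$. This is immediate from $\abs{b_n-b_m}\le\abs{t_n-t_m}+\abs{s_n-s_m}\le\abs{t_n-t_m}+\abs{I}$: once $\abs{t_n-t_m}<1-\abs{I}$ the integers $b_n,b_m$ coincide, and then $t-b=\lim_n s_n\in(0,1)$ forces $b=\lfloor t\rfloor$. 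The point is that the relevant companion quantities, namely $1/r_{n1}$ and $1/r_{ni}$, are by \eqref{eq:CF1a} and $r_{ni}>1$ (which follows from $R_{i,min}>1$) always confined to a subinterval of $(0,1)$, hence to an interval of length $<1$.

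In the base case $i=1$, \eqref{eq:CF1a} gives $1/\alpha_n=a_{n1}+1/r_{n1}$ with $1/r_{n1}\in[1/R_{1,max},1/R_{1,min}]$; boundedness of $a_{n1}$ makes $1/\alpha_n$ bounded, so $\alpha_n$ stays away from $0$ and $1/\alpha_n\to1/\alpha$. Applying the fact with $t_n=1/\alpha_n$, $b_n=a_{n1}$, $s_n=1/r_{n1}$ produces $n_0$ past which $a_{n1}$ equals a fixed $b=\lfloor1/\alpha\rfloor$; then $\alpha_n=1/(b+1/r_{n1})\to1/(b+1/\rho)$ with $\rho=1/(1/\alpha-b)>1$, so $\alpha=[0;b,\rho]$, whence $b$ is the first continued-fraction element of $\alpha$ and $b=a_1=\lim_\nu a_{\nu1}$.

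For the inductive step $i\ge2$, I would first note that $r_{n,i-1}=a_{ni}+1/r_{ni}$ with $a_{ni}\in\NNa$ and $1/r_{ni}\in[1/R_{i,max},1/R_{i,min}]$, so the $r_{n,i-1}$ lie in a compact subinterval of $(1,\infty)$ with left endpoint at least $1+1/R_{i,max}$; hence the hypotheses hold at level $i-1$, and the inductive hypothesis supplies $n_0'$ with $a_{nj}=a_j$ for all $j\le i-1$ and $n>n_0'$. For such $n$, \eqref{eq:CF3} exhibits $\alpha_n$ as a Möbius function $\Phi$ of $r_{n,i-1}$ whose coefficients $p_{i-1},q_{i-1},p_{i-2},q_{i-2}$ are now frozen; by \eqref{eq:CF2} $\Phi$ has nowhere-vanishing derivative, hence restricts to a homeomorphism on the compact interval containing the $r_{n,i-1}$, so $\alpha_n\to\alpha=\Phi(r_{i-1})$ forces $r_{n,i-1}\to r_{i-1}$. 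A second application of the fact (with $t_n=r_{n,i-1}\to r_{i-1}$, $b_n=a_{ni}$, $s_n=1/r_{ni}$) shows $a_{ni}$ is eventually a constant $a_i=\lfloor r_{i-1}\rfloor$, which is the $i$-th continued-fraction element of $\alpha$ because $r_{i-1}-a_i=\lim_n(1/r_{ni})\in(0,1)$. Taking $n_0$ to be the larger of $n_0'$ and the stabilization index for $a_{ni}$ completes the step and the induction; the two closing clauses of the statement merely restate that $a_{nj}$ is eventually constant and equal to $a_j$.

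I expect the only genuine content to be that elementary fact together with the observation that each companion $1/r_{ni}$ is trapped in an interval of length less than $1$ — which is exactly what keeps the integer $a_{ni}$ from drifting. Everything else is bookkeeping: propagating the bounds $R_{i,min},R_{i,max}$ down one level per step, the continuity argument for $r_{n,i-1}\to r_{i-1}$ once the earlier elements are frozen, and identifying the stabilized values with the continued-fraction elements of $\alpha$ via uniqueness of truncated expansions.
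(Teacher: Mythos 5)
Your proof is correct, but it takes a genuinely different route from the paper's. The paper works at level $i$ directly: treating the elements momentarily as real variables, it bounds $\absb{\partial\alpha_n/\partial a_{ni}}$ from below by a positive $\epsilon_i$ (via \eqref{eq:CF2} and the bounds on $r_{ni}$ and on the $q$'s), so that a unit change in the integer $a_{ni}$ forces $\abs{\alpha_n-\alpha_{n+1}}\ge\epsilon_i$, contradicting convergence; it then recovers $r_{ni}$, and hence the identification $a_j=a_{nj}$, by solving \eqref{eq:CF3} for $r_{ni}$ as a continuous function of $\alpha_n$. You instead run a bottom-up induction, freezing $a_{n1},\dotsc,a_{n,i-1}$ first and then isolating $a_{ni}$ as the integer part of a convergent quantity ($1/\alpha_n$ in the base case, $r_{n,i-1}$ in the step) whose companion $1/r_{ni}$ is trapped in the fixed closed subinterval $[1/R_{i,max},1/R_{i,min}]$ of $(0,1)$. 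Both arguments rest on the same principle --- integrality of the elements plus quantitative control of how much a unit change in an element moves $\alpha_n$ --- but yours is more elementary (no derivatives) and arguably cleaner: the paper's derivative estimate controls only the variation in $a_{ni}$ with the other variables held fixed, whereas between consecutive $n$ the earlier elements and $r_{ni}$ also move; your ordering (stabilize the earlier elements, invert the now-frozen M\"obius map to get $r_{n,i-1}\rightarrow r_{i-1}$, then split off the integer part) sidesteps that, and it yields the identification $a_j=\lfloor\cdot\rfloor$ with the limit's continued-fraction elements as a byproduct rather than as a separate closing step.
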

\begin{proof} 
The lower bound on $r_{ni}$ guarantees that $\alpha_n$ does not terminate before $a_{n,i+2}$,
and $1\le a_{n,i+1}\le r_{ni} \le  a_{n,i+1}+1$.  

If, for a moment, we regard $a_{1},\dotsc,a_{i},\text{ and }r_{i}$ as real variables rather than 
integers, we can differentiate $\alpha_n$ with respect to $a_{ni}$, using \eqref{eq:CF1}, 
\eqref{eq:CF3}, and \eqref{eq:CF2}:
\begin{equation*}
    \absBB{\frac{\partial\,\alpha_{n}}{\partial\,a_{ni}}}=
             \absBB{\frac{r_{ni}^2\,(p_{n,i-1}\,q_{ni}-p_{ni}\,q_{n,i-1})}{(r_{ni}\,q_{ni}+q_{n,i-1})^2}}                    
           = \absBB{\frac{r_{ni}^2\,(-1)^i}{(r_{ni}\,q_{ni}+q_{n,i-1})^2}}
           \ge \epsilon_i>0,
\end{equation*}
where $\epsilon_i = (q_{ni}+q_{n,i-1}/R_{i,min})^{-2}$.

Therefore, again considering the variables to be integers, if $a_{ni}\ne a_{n+1,i}$, then
$\abs{a_{ni}-a_{n+1,i}}\ge 1$, so $\abs{\alpha_n-\alpha_{n+1}}\ge\epsilon_i$.
This cannot happen for large $n$ since $\alpha_n\rightarrow\alpha$, so for sufficiently large $n$, 
$a_{ni}$ must be independent of $n$.  

This can be applied to smaller values of $i$, so all of $a_{n1},\dotsc,a_{ni}$ are independent 
of $n$ for sufficiently large $n$; i.e., for  $n>n_0$, this determining $n_0$.

Since $\alpha_n$ does not terminate at $a_{ni}$, $\alpha_n\neq p_{ni}/q_{ni}$, and we can
solve \eqref{eq:CF3} for $r_{ni}$:
\begin{equation*}
    r_{ni}=\frac{-\alpha_n\,q_{n,i-1}+p_{n,i-1}}{\alpha_n\,q_{ni}-p_{ni}}.
\end{equation*}
Since $r_{ni}$ is bounded, this is a continuous function of $\alpha_n$, so
$r_{ni}$ approaches a limit $r_i^*$, 
and $\alpha = \lim_{\nu\rightarrow\infty}\alpha_{\nu}= [a_{n1},\dotsc,a_{ni},r_i^*]$ for $n>n_0$.
As above, $\abs{a_{nj}-a_j}$ must be less than $1$ for large $n$, so 
$a_j=\lim_{\nu\rightarrow\infty} a_{\nu j}=a_{nj}$ for $1\le j\le i$ if $n>n_0$.
\end{proof}

\pagebreak[3]
\begin{lemma}\label{le:2}        
For some $i\in\NNa$, let $a_{n1},\dotsc,a_{ni}$ be bounded, 
and let $R_{i,min}$ and $R_{i,max}$ be such that 
$1<R_{i,min}\le r_{ni}\le R_{i,max}<\infty$.  
If there is a number $n_0\in\NNa$, and a function $h$ with $0\le h(a_{n1},\dotsc,a_{ni})$ 
for all $n> n_0$, then $0\le h(a_{1},\dotsc,a_{i})$.

Also, for $1\le j\le i$, if there is a number $M_j$ such that, $a_{nj} < M_j$ 
for $n\ge n_0$, then $a_j < M_j$.
\end{lemma}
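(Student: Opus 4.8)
The plan is to obtain this lemma as an immediate corollary of Lemma \ref{le:1}. Observe that the hypotheses imposed here are exactly those of Lemma \ref{le:1}: the elements $a_{n1},\dots,a_{ni}$ are bounded, and the truncation parameter $r_{ni}$ stays inside a fixed compact subinterval $[R_{i,min},R_{i,max}]$ of $(1,\infty)$. So I would first invoke Lemma \ref{le:1} to obtain a threshold $n_1\in\NNa$ such that, for every $n>n_1$ and every $j$ with $1\le j\le i$, the element $a_{nj}$ is independent of $n$ and in fact equals $a_j=\lim_{\nu\rightarrow\infty}a_{\nu j}$.

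Next I would set $N:=\max(n_0,n_1)$, so that for all $n>N$ the hypothesis (on $h$, or on $a_{nj}$) and the conclusion of Lemma \ref{le:1} hold simultaneously. For the first assertion, pick any $n>N$; then the integer tuples $(a_1,\dots,a_i)$ and $(a_{n1},\dots,a_{ni})$ coincide, so $h(a_1,\dots,a_i)=h(a_{n1},\dots,a_{ni})\ge 0$. It is worth emphasizing that no continuity or other regularity of $h$ is needed: Lemma \ref{le:1} gives that the arguments become literally equal, not merely close, for large $n$, so the inequality transfers verbatim. For the second assertion, take the same $n>N$; then for each $j$ with $1\le j\le i$ one has $a_j=a_{nj}<M_j$.

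I expect essentially no obstacle here: all the genuine content — that $\alpha_n\rightarrow\alpha$ together with the boundedness hypotheses forces the first $i$ partial quotients to stabilize — is already carried out in Lemma \ref{le:1} through the derivative estimate for $\partial\alpha_n/\partial a_{ni}$. The only point requiring any care is the bookkeeping of the two thresholds, which is why I take $N=\max(n_0,n_1)$; the harmless discrepancy between ``$n>n_0$'' in the first hypothesis and ``$n\ge n_0$'' in the second plays no role once this is done.
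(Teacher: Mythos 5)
Your proof is correct and is exactly the paper's argument: the paper also deduces Lemma~\ref{le:2} directly from Lemma~\ref{le:1} via the observation that $a_j=a_{nj}$ for all sufficiently large $n$, so the inequalities on the $a_{nj}$ transfer verbatim to the $a_j$. Your version merely spells out the threshold bookkeeping that the paper leaves implicit.
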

\begin{proof}
This follows from Lemma 1 because $a_j=a_{nj}$ for sufficiently large $n$.
\end{proof}

If $\set{r_{ni}}$ is not bounded, we cannot conclude that $a_{ni}$ is independent of $n$. 
Indeed, it might be that one of $r_{2\,n,i}$ and $r_{2\,n+1,i}$ tends to $1$, the
other to $\infty$, in which case $a_{2\,n+1,i}-a_{2\,n,i}$ tends to $1$ or $-1$.
For example, $\alpha_{2n}=[0;2,n]$ and $\alpha_{2n+1}=[0;1,1,n]$, 
both of which tend to $[0;2]=1/2$, and for which $r_{2n,1}=n\rightarrow\infty$,
and $r_{2n+1,1}=1+1/n \rightarrow 1$.

\begin{lemma}\label{le:3}      
For some $i\in\NNa$, let $a_{n1},\dotsc,a_{ni}$ be bounded.  
If $r_{ni}$ is not bounded away from $1$ or $\infty$, then then $\alpha$ terminates; i.e, 
$\alpha = [0;a_1,\dotsc,a_j]$ for some $j\le i$.
If $a_1\rightarrow\infty$, $\alpha = 0$ (corresponding to $i=j=0$).
\end{lemma}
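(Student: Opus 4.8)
The plan is to sidestep Lemma~\ref{le:1} — whose hypothesis, that $r_{ni}$ stay away from $1$ and $\infty$, is precisely what is now denied — and instead to read $\alpha$ off the closed form \eqref{eq:CF3} directly, using that along a suitable subsequence $r_{ni}$ is driven to one of the endpoints $1$, $\infty$, which pins $\alpha$ to a rational, i.e.\ to a terminating continued fraction.

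Take first $i\ge 1$. I would begin by using the hypothesis to fix a subsequence along which $r_{n_k,i}\to 1$, or one along which $r_{n_k,i}\to\infty$. Since $a_{n1},\dots,a_{ni}$ are bounded, along that subsequence the tuple $(a_{n_k,1},\dots,a_{n_k,i})$ takes only finitely many values, so after passing to a further subsequence (still written $\alpha_{n_k}$) I may assume $a_{n_k,j}=b_j$ for $1\le j\le i$, with each $b_j$ independent of $k$. Writing $p_i,q_i,p_{i-1},q_{i-1}$ for the integers that \eqref{eq:CF1} produces from $a_0=0,\,a_1=b_1,\dots,a_i=b_i$ — constants along this subsequence — the truncation formulas \eqref{eq:CF1a}--\eqref{eq:CF3} give
\begin{equation*}
\alpha_{n_k}=\frac{r_{n_k,i}\,p_i+p_{i-1}}{r_{n_k,i}\,q_i+q_{i-1}} .
\end{equation*}
The right-hand side is a continuous function of $r$ on all of $[1,\infty]$ — the denominator is $\ge q_i\ge 1$ for $r\ge 1$ (by \eqref{eq:CF1}), and the value at $r=\infty$ is $p_i/q_i$ — so letting $k\to\infty$ and using $\alpha_{n_k}\to\alpha$ yields
\begin{equation*}
\alpha=\frac{p_i+p_{i-1}}{q_i+q_{i-1}}\qquad\text{or}\qquad\alpha=\frac{p_i}{q_i},
\end{equation*}
according as $r_{n_k,i}\to 1$ or $r_{n_k,i}\to\infty$. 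Both are terminating continued fractions: by \eqref{eq:CF0} and \eqref{eq:CF1}, $p_i/q_i=[0;b_1,\dots,b_i]$, while $p_i+p_{i-1}$ and $q_i+q_{i-1}$ are exactly the numerator and denominator \eqref{eq:CF1} assigns to $[0;b_1,\dots,b_i,1]=[0;b_1,\dots,b_{i-1},b_i+1]$. In canonical form each has at most $i$ partial quotients, so $\alpha=[0;a_1,\dots,a_j]$ for some $j\le i$, as claimed.

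The case $i=0$, i.e.\ $a_1\to\infty$, is the degenerate instance of this computation, with $p_0=0,\,q_0=1,\,p_{-1}=1,\,q_{-1}=0$; more simply still, the definition of the continued fraction gives $0<\alpha_n\le 1/a_{n1}$, so $a_{n1}\to\infty$ forces $\alpha_n\to 0$, whence $\alpha=0$ — the case $i=j=0$. The only point needing care is the reduction: one must extract a \emph{single} subsequence on which the finitely many bounded elements $a_{n1},\dots,a_{ni}$ all become constant \emph{and} on which $r_{ni}$ still tends to an endpoint; once that is in hand the rest is just continuity of the map in \eqref{eq:CF3} up to $r=1$ and $r=\infty$, and I foresee no real obstacle.
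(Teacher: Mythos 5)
Your proposal is correct and follows essentially the same route as the paper: both pass to a subsequence on which $r_{ni}$ tends to $1$ or to $\infty$ and evaluate the limit of the M\"obius transformation \eqref{eq:CF3} at that endpoint, obtaining $[0;b_1,\dotsc,b_{i-1},b_i+1]$ or $[0;b_1,\dotsc,b_i]$ respectively. The only (harmless) differences are that you fix the initial elements by pigeonhole on the finitely many bounded integer tuples rather than by invoking Lemma~\ref{le:1}, and that you let $r_{ni}$ vary in \eqref{eq:CF3} directly where the paper instead writes $\alpha_n$ in terms of $r_{n,i-1}=a_{ni}+1/r_{ni}$ and takes the limit there.
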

\begin{proof} 
From Lemma \ref{le:1}, $a_{n1},\dotsc,a_{n,i-1}$ are constant for sufficiently large $n$.  
For those values of $n$, $p_{i-2}:=p_{n,i-2}$, $p_{i-1}:=p_{n,i-1}$, $q_{i-2}:=q_{n,i-2}$, 
and $q_{i-1}:=q_{n,i-1}$ are independent of $n$, so
\begin{equation*}
    \alpha_n = \frac{r_{n,i-1}\, p_{i-1}+p_{i-2}}{r_{n,i-1}\, q_{i-1}+q_{i-2}}.
\end{equation*}

Suppose $r_{ni}$ is not bounded away from $\infty$. 
Since ${a_{ni}} $ is a bounded integer, and therefore has only a finite number of limit points, there is 
a subsequence of ${\alpha_n}$ with $r_{ni}\rightarrow\infty$, and 
$a_{ni}$ converging to one of those limit points; call the limit point $a_i^*$. 
For that subsequence, $r_{n,i-1}= a_{ni}+1/r_{n,i}\rightarrow a_i^*$, so 
\begin{equation*}
    \alpha = \frac{a_i^*\, p_{i-1}+p_{i-2}}{a_i^*\, q_{i-1}+q_{i-2}},
\end{equation*}
Thus, $\alpha=[0;a_1,\dotsc,a_{i-1},a_i^*]$. If $a_i^*=1$, by our convention we change this
to $\alpha=[0;a_1,\dotsc,a_{i-2},a_{i-1}+1]$.

If $r_{ni}$ is not bounded away from $1$, the same applies except that we use a
subsequence with $r_{ni}\rightarrow 1$, so $r_{n,i-1}=a_{ni}+1/r_{ni}\rightarrow a_{ni}+1$.
In this case $\alpha=[0;a_1,\dotsc,a_{i-1},a_{i}+1]$. 

That $\alpha = 0$ if $a_1\rightarrow \infty$ is obvious.
\end{proof}

\begin{lemma}\label{le:4}   
$\absB{\overline{F_{mk}^0}}=\abs{F_{mk}^0}$, and $\absB{\overline{G_{k}^0}}=\abs{G_{k}^0}$.
\end{lemma}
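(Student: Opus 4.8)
The plan is to show that for $S=F_{mk}^0$ (taking $i:=m+k$) and for $S=G_k^0$ (taking $i:=k$) the set $\overline S\setminus S$ is countable. Since $S$ is open and $\overline S$ is closed, both are measurable and $|\overline S|=|S|+|\overline S\setminus S|$, so this gives the lemma. Recall from \eqref{eq:F3} (and the corresponding definition of $G_k^0$) that $S=\bigcup_{\alpha(i)}J_{\alpha(i)}$ is a union, over the countably many admissible prefixes $\alpha(i)=[0;a_1,\dots,a_i]$, of pairwise disjoint open intervals, and that a real number lies in $J_{\alpha(i)}$ exactly when it equals $[0;a_1,\dots,a_i,r_i]$ for that prefix and some $r_i\in(1,\infty)$. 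In particular there are only countably many endpoints among all the $J_{\alpha(i)}$.

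I would fix $\beta\in\overline S\setminus S$ and a sequence $\alpha_n=[0;a_{n1},a_{n2},\dots]\in S$ with $\alpha_n\to\beta$, written as $\alpha_n=[0;a_{n1},\dots,a_{ni},r_{ni}]$ with $r_{ni}\in(1,\infty)$, and distinguish two cases according to whether the integer prefixes $(a_{n1},\dots,a_{ni})_n$ stay bounded as $n\to\infty$. If they do, they take only finitely many values, so a subsequence has a fixed prefix $(b_1,\dots,b_i)$; this prefix is admissible (the corresponding $\alpha_n$ lie in $S$), and along that subsequence $\alpha_n\in J:=J_{[0;b_1,\dots,b_i]}$, hence $\beta\in\overline J$. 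Since $\beta\notin S\supseteq J$, the point $\beta$ must be one of the two endpoints of $J$, so $\beta$ lies in the countable set of all interval endpoints.

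If the prefixes are unbounded, let $j_0$ be the least index for which $(a_{n,j_0})_n$ is unbounded, so that $(a_{n,1},\dots,a_{n,j_0-1})_n$ is bounded; pass to a subsequence along which $a_{n,j_0}\to\infty$. Then $r_{n,j_0-1}=a_{n,j_0}+1/r_{n,j_0}\ge a_{n,j_0}\to\infty$ along that subsequence, while $a_{n,1},\dots,a_{n,j_0-1}$ remain bounded, so Lemma~\ref{le:3} applies (with $j_0-1$ in place of $i$ when $j_0\ge 2$, or via its last clause when $j_0=1$) and gives that $\beta=\lim\alpha_n$ is a terminating continued fraction $[0;a_1,\dots,a_\ell]$ with $\ell\le j_0-1$ (in particular $\beta=0$ when $j_0=1$), so $\beta\in\QQ$. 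Combining the two cases, $\overline S\setminus S\subseteq\QQ\cup\{\text{endpoints of the }J_{\alpha(i)}\}$, which is countable; hence $|\overline S\setminus S|=0$ and $|\overline S|=|S|$.

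The main obstacle will be the unbounded-prefix case: recognizing that an unbounded prefix can only limit to a rational number, and preparing the hypotheses of Lemma~\ref{le:3} correctly — selecting the subsequence with $a_{n,j_0}\to\infty$, checking that the earlier elements $a_{n,1},\dots,a_{n,j_0-1}$ remain bounded so that the lemma applies with the index shifted to $j_0-1$, and using $r_{n,j_0-1}\to\infty$. The bounded-prefix case and the accounting that only countably many interval endpoints appear are routine by comparison.
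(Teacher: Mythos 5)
Your proof is correct, and its overall strategy is the one the paper uses: classify the limits of convergent sequences from $S$ according to whether the length-$i$ prefixes stabilize, put the ``good'' limits into closures of single intervals $J_{\alpha(i)}$, and use Lemma~\ref{le:3} to show the remaining limits are terminating continued fractions, hence form a countable (measure-zero) set. The differences are worth noting, though, because your version is leaner. In the bounded-prefix case the paper invokes Lemmas~\ref{le:1} and~\ref{le:2} to conclude that the limit still satisfies the defining constraint \eqref{eq:E1} or \eqref{eq:G1} and so lies in some $J_{\alpha(i)}$; you instead pass by pigeonhole to a subsequence with a literally constant admissible prefix, observe that those terms all lie in a single $J$, and conclude that a limit outside $S$ must be an endpoint of that $J$ --- this bypasses Lemmas~\ref{le:1} and~\ref{le:2} entirely and avoids having to split on whether $r_{ni}$ is bounded away from $1$ and $\infty$. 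In the unbounded case you are actually more explicit than the paper: you isolate the least unbounded index $j_0$, extract a subsequence with $a_{n,j_0}\rightarrow\infty$, and verify the hypotheses of Lemma~\ref{le:3} at index $j_0-1$ (or use its last clause when $j_0=1$), whereas the paper simply assigns all ``remaining sequences'' to $L_{mk}$ and cites Lemma~\ref{le:3}. Finally, your accounting $\abs{\overline{S}}=\abs{S}+\abs{\overline{S}\setminus S}$ with $\overline{S}\setminus S$ countable is cleaner than the paper's summation $\abs{\overline{F_{mk}^0}}=\abs{L_{mk}}+\sum\abs{\overline{J_{\alpha(m+k)}}}$, which is literally only an upper bound (the closed intervals can share endpoints) and needs $\abs{\overline{S}}\ge\abs{S}$ to close the argument. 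Both routes deliver the lemma; yours trades the derivative estimate of Lemma~\ref{le:1} for an elementary finiteness argument.
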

\begin{proof}
Equation \eqref{eq:F3} shows that the set $F_{mk}^0$ is the union of a 
countable set of pairwise disjoint open intervals $J_{\alpha(m+k)}$.
We will show that there is a countable set $L_{mk}$ such that 
$\overline{F_{mk}^0} = L_{mk}\bigcup\Bigl(\bigcup_{\alpha(m+k)}\overline{J_{\alpha(m+k)}}\Bigr)$. 
Since $\absb{\overline{J_{\alpha(m+k)}}}=\abs{J_{\alpha(m+k)}}$, it follows that
$\absB{\overline{F_{mk}^0}}
    =\abs{L_{mk}} + \sum_{\alpha(m+k)} \absb{\overline{J_{\alpha(m+k)}}}
    =0 + \sum_{\alpha(m+k)} \abs{J_{\alpha(m+k)}}
    =\abs{F_{mk}^0}$.

Apply the results above with $S=F_{mk}^0$ and $M_j=\phi(j)$ for $m+1\le j\le m+k$. 
If $a_{n1},\dotsc,a_{nm}$ are bounded, and $r_{n,m+k}$ is bounded away from
$1$ and $\infty$, then $\alpha$ satisfies \eqref{eq:E1} by Lemma \ref{le:2}, 
so it is in some $J_{\alpha(m+k)}$, hence in $\overline{J_{\alpha(m+k)}}$. 
Limits of the remaining sequences constitute the set $L_{mk}$.  
By Lemma \ref{le:3}, each point in this set is zero or it has the form $[0;a_1,\dotsc,a_j]$,
where $1\le j\le m+k$. 
Therefore, $L_{mk}$, is
countable because the set of combinations $\set{a_1,\dotsc,a_{m+k}}$ is countable. 

For $S=G_{k}^0$, let $h(a_1,\dotsc,a_k)=a_1\cdots a_k - e^{A\,k}$.
If $a_{n1},\dotsc,a_{nk}$ are bounded, and $r_{mk}$ is bounded away from $1$ and
$\infty$, apply the Lemma \ref{le:2} to see that $\alpha$ satisfies \eqref{eq:G1},
so $\alpha$ is in some $J_{\alpha(k)}$, hence in $\overline{J_{\alpha(k)}}$.  
As above, limits of the remaining sequences constitute a set $L_{k}$
which is countable, and it follows that $\absB{\overline{G_k^0}}=\abs{G_k^0}$.
\end{proof}
Thus we have established the second and third properties in 
both \eqref{eq:S3} and \eqref{eq:S5}.

\simplesection{Appendix}

For convenient reference we provide
\begin{KT}\cite[Theorem 32]{Khinchin}
Let $\SET{b_n>0}{n\in\NNa}$ have $n\,b_n$ nonincreasing for sufficiently large $n$, and
$\sum_n b_n$ divergent.
Then for almost all $\alpha \in \RR$ there are infinitely-many pairs 
$\{m,n\}\subset\NNa$ such that
\begin{equation}\label{eq:A19}
\abs{n\,\alpha-m}<b_n.\footnote{Khinchin uses the Borel-Cantelli
Lemma to show also that if 
$\sum_n b_n$ converges, then \eqref{eq:A19} is satisfied for only a finite number
of pairs $\set{m,n}$, except for $\alpha$ in a set of measure zero.}
\end{equation}
\end{KT}

\begin{remark}
It is not clear how the elements of $\alpha\in(j,j+1)$ change when 
$\alpha$ is negated.  For $j\ge 0$, the negative of $[j;a_1,a_2,\dotsc]$ is 
$-j-[0;a_1,a_2,\dotsc] = [-j-1;a_1',a_2',\dotsc]$. 
The elements must satisfy $a_k'\ge 1$, and it 
might be that conditions such as $a_i\le \phi_i$ are not preserved under such a change.  
For example, $a_1$ increases in the terminating continued fractions
\begin{equation*}
\frac{3}{4}=[0;1,3]\quad\text{and}\quad -\frac{3}{4}=-1+\frac{1}{4}=[-1;4].
\end{equation*}
Thus it seems likely that $E$ is not symmetric (i.e., that 
$-E\ne E$).
\end{remark}

\begin{remark}
Consider $E_\textup{K}$, with the subscript chosen in honor of Khinchin, 
as the smallest possible zero-measure set of Khinchin's Theorem.
It must be symmetric. Therefore it is likely that $E_\textup{K}\ne E$. 
Note that $E_\textup{K}$ is not a unique set, but that it depends on $\set{b_n}$.
\end{remark}



\end{document}